
\documentclass[a4paper]{amsart}
\bibliographystyle{plain}

\usepackage{amsmath,amsthm,amssymb}
\usepackage[truedimen,margin=25truemm]{geometry}
\usepackage{url}
\usepackage[all]{xy}

\numberwithin{equation}{section}



\theoremstyle{definition}
\newtheorem{thm}{Theorem}[section]

\newtheorem{definition}[thm]{Definition}
\newtheorem{lem}[thm]{Lemma}
\newtheorem{cor}[thm]{Corollary}

\newcommand{\Mat}{{\rm Mat}}

\newcommand{\diag}{{\rm diag}}

\begin{document}
\title[]{Generalization of Frobenius' theorem for group determinants}
\author[N. Yamaguchi]{Naoya Yamaguchi}
\date{\today}
\keywords{Frobenius' theorem; noncommutative determinant; group determinant; group algebra.}
\subjclass[2010]{Primary 20C15; Secondary 15A15; 22D20.}

\maketitle

\begin{abstract}
Frobenius built a representation theory of finite groups in the process of obtaining the irreducible factorization of the group determinant. 
Here, we give a generalization of Frobenius' theorem. 
The generalization leads to a corollary on irreducible representations of finite groups. 
\end{abstract}

\section{\bf{Introduction}}
In this paper, we give a generalization of Frobenius' theorem. 
The generalization leads to a corollary on irreducible representations of finite groups. 
Let $G$ be a finite group, 
let $\widehat{G}$ a complete set of irreducible representations of $G$ over $\mathbb{C}$, 
and let $\mathbb{C}[x_{g}] = \mathbb{C}[x_{g} ; g \in G]$ the polynomial ring in $\{ x_{g} \: \vert \: g \in G \}$ with coefficients in $\mathbb{C}$. 
The group determinant $\Theta(G) \in \mathbb{C}[x_{g}]$ is the determinant of a matrix whose elements are independent variables $x_{g}$ corresponding to $g \in G$. 
Frobenius proved the following theorem about the irreducible factorization of the group determinant.

\begin{thm}[Frobenius' theorem \cite{conrad1998origin}]\label{thm:1.1.1}
Let $G$ be a finite group. 
Then we have the irreducible factorization: 
$$
\Theta(G) = \prod_{\varphi \in \widehat{G}} \det{\left( \sum_{g \in G} \varphi (g) x_{g} \right)^{\deg{\varphi}}}. 
$$
\end{thm}

Frobenius built a representation theory of finite groups in the process of obtaining Theorem~$\ref{thm:1.1.1}$ (see, \cite{Frobenius1968gruppen}, \cite{Frobenius1968gruppencharaktere}). 
This theorem is a generalization of Dedekind's theorem. 
\begin{thm}[Dedekind's theorem \cite{conrad1998origin}]\label{thm:1.1.2}
Let $G$ be a finite abelian group and let $\widehat{G}$ the group of characters of $G$. 
Then we have 
$$
\Theta(G) = \prod_{\chi \in \widehat{G}} \sum_{g \in G} \chi (g) x_{g}. 
$$
\end{thm}

We denote the index of a subgroup $H$ in $G$ as $[G : H]$. 
The paper~\cite{Yamaguchi2017} give a generalization of Dedekind's theorem. 

\begin{thm}[Generalization of Dedekind's theorem \cite{Yamaguchi2017}]\label{thm:1.1.4}
Let $G$ be a finite abelian group and let $H$ a subgroup of $G$. 
For every $h \in H$, there exists a homogeneous polynomial $A_{h} \in \mathbb{C}[x_{g}]$ such that $\deg{A_{h}} = \left[G : H \right]$ and 
$$
\Theta(G) = \prod_{\chi \in \widehat{H}} \sum_{h \in H} \chi(h) A_{h}. 
$$
If $H = G$, we can take $A_{h} = x_{h}$ for each $h \in H$. 
\end{thm}

We denote the group algebra of $H$ over $\mathbb{C}$ as $\mathbb{C} H$, and let $\mathbb{C}[x_{g}] H := \mathbb{C} [x_{g}] \otimes \mathbb{C} H$ for any subgroup $H$ of $G$. 
Here, we give a generalization of Theorems~$\ref{thm:1.1.1}$ and $\ref{thm:1.1.4}$. 
The theorem is as follows.

\begin{thm}[Generalization of Frobenius' theorem]\label{thm:1.1.2}
Let $G$ be a finite group, 
let $H$ a subgroup of $G$, 
let $L$ a left regular representation from $\mathbb{C}[x_{g}] G$ to $\Mat([G : H], \mathbb{C}[x_{g}] H)$, 
let $\alpha = \sum_{g \in G} x_{g} g \in \mathbb{C}[x_{g}] G$, 
and let $L(\alpha) = \sum_{h \in H} h c_{h}$, 
where $c_{h} \in \Mat([G : H], \mathbb{C}[x_{g}] \{ 1_{G} \})$. 
Then, we have 
\begin{align*}
&\Theta(G) = \prod_{\psi \in \widehat{H}} \det{\left( \sum_{h \in H} \psi(h) \otimes c_{h} \right)^{\deg{\psi}}}, 
\end{align*}
where $1_{G}$ is the unit element of $G$ and $\otimes$ is the Kronecker product. 
\end{thm}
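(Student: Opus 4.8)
The plan is to compute $\Theta(G)$ by realizing the left regular representation of $G$ as a two-step regular representation that factors through $H$. Concretely, I would view $L(\alpha)=\sum_{h\in H}C_{h}h$ as an element of the group algebra $SH$ of $H$ over the (noncommutative) coefficient ring $S=\Mat([G:H],R)$, and then apply the left regular representation $\rho$ of $H$ to its group elements entrywise. Since composing $L\colon RG\to\Mat([G:H],RH)$ with the regular representation $RH\to\Mat(|H|,R)$ applied entrywise reproduces, up to a permutation of the basis $\{g_{i}h\}$ of $RG$, the full left regular representation of $RG$, the ordinary determinant over $R$ of the resulting $|G|\times|G|$ matrix is exactly $\Theta(G)$. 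This yields the key intermediate identity
\[
\Theta(G)=\det\Bigl(\sum_{h\in H}C_{h}\otimes\rho(h)\Bigr),
\]
where $C_{h}\otimes\rho(h)$ is read as the block matrix whose $(i,j)$-block is $(C_{h})_{ij}\rho(h)$.

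Next I would factor this determinant using the representation theory of $H$, i.e.\ the mechanism behind Theorem~\ref{thm:1.1.1}. There is a fixed invertible matrix $P$ over $\mathbb{C}$, independent of the variables $x_{g}$, with $P^{-1}\rho(h)P=\bigoplus_{\psi\in\widehat{H}}\psi(h)^{\oplus\deg\psi}$ for all $h\in H$. Conjugating $\sum_{h}C_{h}\otimes\rho(h)$ by $I_{[G:H]}\otimes P$, distributing the tensor product over the direct sum, and regrouping (a permutation similarity) block-diagonalizes it into the blocks $\sum_{h}C_{h}\otimes\psi(h)$, each occurring with multiplicity $\deg\psi$. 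Multiplicativity of the determinant on block-diagonal matrices then gives
\[
\Theta(G)=\prod_{\psi\in\widehat{H}}\det\Bigl(\sum_{h\in H}C_{h}\otimes\psi(h)\Bigr)^{\deg\psi},
\]
and the identity $\det(A\otimes B)=\det(B\otimes A)$ lets me swap the tensor factors into the order $\psi(h)\otimes C_{h}$ appearing in the statement.

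The remaining point, and the step I expect to be the main obstacle, is to match this computation with the operation $F_{[G:H]}$ in the stated formula. Tracking the conventions explicitly, for a choice of coset representatives $g_{1},\dots,g_{[G:H]}$ the $(i,j)$-entry of $C_{h}$ produced by $L$ is $x_{g_{i}hg_{j}^{-1}}$; reconciling the matrix-of-left-multiplication convention used to define $L$ with the convention built into the regular-representation matrix forces $C_{h}$ to be replaced by $C_{h}^{F_{[G:H]}}$, which I expect to be a transpose-type operation (it must fix $1\times 1$ matrices and preserve determinants, consistently with the extreme cases $H=G$, which reduces to Theorem~\ref{thm:1.1.1}, and $H=\{e\}$, which reduces to $\Theta(G)=\det C_{e}$). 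The representation-theoretic core above is essentially routine once the intermediate identity is in place; the delicate part is the bookkeeping of row/column and left/right conventions needed to verify that the two-step regular representation really is the regular representation of $G$, and that $F_{[G:H]}$ thereby appears exactly as written.
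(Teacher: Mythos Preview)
Your approach is essentially the paper's own: factor the regular representation of $G$ as $L_U\circ L_T$ with $K=\{e\}$ (the paper's Lemma~3.1.3), identify the determinant of the composite with $\Theta(G)$ (Lemma~3.1.4), and then block-diagonalize via the decomposition of the regular representation of $H$ (Theorem~2.1.2, packaged as Lemma~4.1.1 with $\{e\}\trianglelefteq H$). So the representation-theoretic core you describe is exactly right and matches the paper.

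The one genuine misstep is your reading of $F_{[G:H]}$. In the paper $F_m\colon\Mat(m,RG)\to\Mat(m,R)$ is defined as the entrywise augmentation map, sending $\sum_{g}x_{ij}(g)\,g$ to $\sum_{g}x_{ij}(g)$; it is an $R$-algebra homomorphism with $\det\circ F_m=F_1\circ\det$, not a transpose. Since the hypothesis already has $C_h\in\Mat([G:H],R)$, one simply has $C_h^{F_{[G:H]}}=C_h$, and the ``main obstacle'' you anticipate evaporates: your identity
\[
\Theta(G)=\prod_{\psi\in\widehat{H}}\det\Bigl(\sum_{h\in H}C_{h}\otimes\psi(h)\Bigr)^{\deg\psi}
\]
\emph{is} the stated formula after the tensor swap. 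The superscript $F_{[G:H]}$ in the statement is an artifact of the more general Lemma~4.1.2 (where $C_u\in\Mat(m[G:H],RK)$ genuinely needs augmenting), specialized here to $K=\{e\}$. No convention-matching or transpose is needed; your computation of $(C_h)_{ij}=x_{t_ih t_j^{-1}}$ is already in the right form.
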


Theorem~$\ref{thm:1.1.1}$ is the special case of $H = G$ in the generalization of Frobenius' theorem. 
Theorem~$\ref{thm:1.1.2}$ leads to the following corollary (remark that the following corollary follows from Frobenius reciprocity\cite{kondo2011group}). 

\begin{cor}\label{cor:1.1.3}
Let $G$ be a finite group and let $H$ a subgroup of $G$.
For all $\varphi \in \widehat{G}$, we have 
\begin{align*}
\deg{\varphi} \leq [G:H] \times \max\left\{ \deg{\psi} \: \vert \: \psi \in \widehat{H} \right\}. 
\end{align*}
\end{cor}

\section{\bf{Group determinant}}

Let $G$ be a finite group, 
let $L_{G}$ be the left regular representation of $G$, 
let $\{ x_{g} \: \vert \: g \in G \}$ be independent commuting variables, 
and let $\mathbb{C}[x_{g}] = \mathbb{C}[x_{g} ; g \in G]$ be the polynomial ring in $\{ x_{g} \: \vert \: g \in G \}$ with coefficients in $\mathbb{C}$. 
The group determinant is defined as follows. 

\begin{definition}[Group determinant]
The group determinant $\Theta(G)$ of $G$ is given by 
\begin{align*}
\Theta(G) := \det{\left( \sum_{g \in G} L_{G}(g) x_{g} \right)} \in \mathbb{C} [x_{g}]. 
\end{align*}
\end{definition}
That is, the group determinant $\Theta(G)$ is the determinant of the $|G| \times |G|$ matrix $\left( x_{g, h} \right)_{g, h \in G}$, 
where $x_{g, h} = x_{g h^{-1}}$ for $g, h \in G$, 
and it is thus a homogeneous polynomial of degree $|G|$ in $x_{g}$ (see, e.g.,~\cite[p.~366]{conrad1998origin}, \cite[p.~38]{Frobenius1968gruppen}, \cite[p.~142]{Hawkins1971}, \cite[p.~299]{johnson1991}, \cite[p.~224]{van2013history}). 
Frobenius proved the following theorem about the factorization of the group determinant.

\begin{thm}[Section~$1$, Theorem~$\ref{thm:1.1.1}$]\label{thm:2.1.1}
Let $G$ be a finite group, 
for which we have the irreducible factorization, 
$$
\Theta(G) = \prod_{\varphi \in \widehat{G}} \det{\left( \sum_{g \in G} \varphi (g) x_{g} \right)^{\deg{\varphi}}}. 
$$
\end{thm}

The above equation holds from the following theorem.

\begin{thm}[{\cite[Theorem~4.4.4]{benjamin2013}}]\label{thm:2.2}
Let $G$ be a finite group, 
$\left\{ \varphi_{1}, \varphi_{2}, \ldots, \varphi_{s} \right\}$ a complete set of inequivalent irreducible representations of $G$, 
$d_{i} = \deg{\varphi_{i}}$, 
and $L_{G}$ the left regular representation of $G$. 
Then, 
$$
L_{G} \sim d_{1} \varphi_{1} \oplus d_{2} \varphi_{2} \oplus \cdots \oplus d_{s} \varphi_{s}. 
$$
\end{thm}

\section{\bf{Generalization of Frobenius' theorem}}
Here, we review the left regular representation and describe some of the properties of the left regular representation that will be needed later. 
Let $B$ be a ring and let $A$ be a ring that is a free right $B$-module with an ordered basis $e = \left( e_{1} \: e_{2} \: \cdots \: e_{m} \right)$. 
In other words, $A = \oplus_{i = 1}^{m} e_{i} B$, and $B$ is a subring of $A$. 
Then, for all $a \in A$, there exists a unique $(b_{i j})_{1 \leq i, j \leq m} \in \Mat(m, B)$ such that 
$$
a e_{j} = \sum_{i = 1}^{m} e_{i} b_{i j}. 
$$
The injective $Z(A) \cap B$-algebra homomorphism $L_{e} : A \ni a \mapsto L_{e}(a) = (b_{i j})_{1 \leq i, j \leq m} \in \Mat(m, B)$ is called the left regular representation from $A$ to $\Mat(m, B)$ with respect to $e$, 
where $Z(A)$ is the center of $A$.

Let $C$ be a ring. 
In the following, 
we assume that $B$ is a free right $C$-module with an ordered basis $f = (f_{1} \: f_{2} \: \cdots \: f_{n})$. 
Then, $A$ is a free right $C$-module with an ordered basis $e \otimes f$, 
where $\otimes$ is the Kronecker product. 
We write the left regular representation from $B$ to $\Mat(n, C)$ with respect to $f$ as $L_{f}$ and that from $\Mat(m, B)$ to $\Mat(n, \Mat(m, C))$ with respect to $f \otimes I_{m}$ as $L_{f \otimes I_{m}}$. 
In terms of the regular representations, the following lemma holds. 

\begin{lem}[{\cite[Lemma~$3.6$]{doi:10.1080/25742558.2019.1683131}}]\label{lem:3.1}
The following diagram is commutative: 
\[
\xymatrix{
A \ar[r]^-{L_{e}} \ar[d]_-{L_{e \otimes f}} & \Mat(m, B) \ar[d]^-{L_{f \otimes I_{m}}} \\ 
\Mat(mn, C) \ar@{^{(}-_>}[r]^-{} & \Mat(n, \Mat(m, C)) \\ 
}
\]
That is, $L_{e \otimes f} = L_{f \otimes I_{m}} \circ L_{e}$ holds, 
where we regard $\Mat(n, \Mat(m, C))$ as $\Mat(mn, C)$. 
\end{lem} 

\begin{definition}
Let $R'$ be a ring and let $R$ be a ring that is a free right $R'$-module an ordered basis $v = (v_{1} \: v_{2} \: \cdots \: v_{l})$. 
We define $p_{v} \colon R \to R'$ and $p_{v}^{k} \colon \Mat(k, R) \to \Mat(k, R')$ by 
\begin{align*}
p_{v} \left( \sum_{i = 1}^{l} v_{i} a_{i} \right) = \sum_{i = 1}^{l} a_{i}, 
\quad p_{v}^{k} \left( (a_{i j})_{1 \leq i, j \leq k} \right) = ( p_{v} (a_{i j}) )_{1 \leq i, j \leq k}, 
\end{align*}
respectively. 
In addition, we denote $\Theta( R : R' ) \colon R \to \Mat(l, R')$ as 
$$
\Theta( R : R' ) := \det{} \circ p_{v}^{l} \circ L_{v}, 
$$
where $L_{v}$ is the left regular representation from $R$ to $\Mat(l, R')$ with respect to $v$. 
\end{definition}

We note that $L_{v}$ is covariant under a change of basis. 
However, $\Theta( R : R' )$ is invariant under a change of basis. 
From  Lemma~$\ref{lem:3.1}$, we have the following corollary. 

\begin{cor}
We have $\Theta(A : C) = \Theta(\Mat(m, B), C) \circ L_{e}$. 
\end{cor}

Let $f C := \left\{ f_{i} C \mid 1 \leq i \leq n \right\}$ and we define a product of $f_{i} C$ and $f_{j} C$ as $f_{i} C * f_{j} C := \left\{ f_{i} f_{j} c \mid c \in C \right\}$. 
Assume that the basis $e$ satisfies the following conditions: 
\begin{enumerate}
\item for any $f_{i}$ and $f_{j}$, $f_{i} C * f_{j} C \in f C$; 
\item there exists $f_{k}$ such that $f_{k} C = C$ as set; 
\item for any $f_{i}$, there exists $f_{j}$ such that $f_{i} C * f_{j} C = C$. 
\end{enumerate}
From these conditions, $f_{i}$ is invertible in $B$ and $(f C, *)$ is a group. 
For a group $G$, we denote the regular representation of $G$ as $L_{G}$, 
and we write $\diag( a_{1}, a_{2}, \ldots, a_{k})$ for a diagonal matrix whose diagonal entries starting in the upper left corner are $a_{1}, a_{2}, \ldots, a_{n}$. 
Then we have an expression for regular representations. 

\begin{cor}[{\cite[Corollary~$8.3$]{doi:10.1080/25742558.2019.1683131}}]\label{cor:3.1}
Let $\beta = \sum_{k = 1}^{n} f_{k} c_{k} \in B$, where $c_{k} \in C$. 
Then we have 
$$
L_{f}(\beta) = \diag( f_{1}^{-1}, f_{2}^{-1}, \ldots, f_{n}^{-1} ) \left( \sum_{k = 1}^{n} L_{f C}(f_{k} C) \otimes f_{k} c_{k} \right) \diag( f_{1}, f_{2}, \ldots, f_{n} ). 
$$
\end{cor}

From $\Mat(m, B)$ is a ring that is a free right $\Mat(m, C)$-module with an ordered basis $f \otimes I_{m}$, 
Corollary~$\ref{cor:3.1}$, 
and $(f \otimes I_{m}) C := \left\{ ( f_{i} \otimes I_{m} ) C \mid 1 \leq i \leq n \right\} \cong f C$ as group, 
we have the following corollary. 

\begin{cor}\label{cor:3.2}
Let $\beta = \sum_{k = 1}^{n} (f_{k} \otimes I_{m}) c_{k} \in \Mat(m, B)$, where $c_{k} \in \Mat(m, C)$. 
Then we have 
$$
L_{f \otimes I_{m}}(\beta)  = \diag( f_{1}^{-1} \otimes I_{m}, \ldots, f_{n}^{-1} \otimes I_{m} ) \left( \sum_{k = 1}^{n} L_{f C} \left( f_{k} C \right) \otimes ( f_{k} \otimes I_{m} ) c_{k} \right) \diag( f_{1} \otimes I_{m}, \ldots, f_{n} \otimes I_{m} ). 
$$
\end{cor}

From Theorem~$\ref{thm:2.2}$ and Corollary~$\ref{cor:3.2}$, 
we have the following lemma. 

\begin{lem}\label{lem:3.6}
Let $C$ be a commutative and $\beta = \sum_{k = 1}^{n} (f_{k} \otimes I_{m}) c_{k} \in \Mat(m, B)$, where $c_{k} \in \Mat(m, C)$. 
Then we have 
\begin{align*}
\Theta(\Mat(m, B), C) = \left( \det{} \circ p_{f \otimes I_{m}}^{m n} \circ L_{f \otimes I_{m}} \right)(\beta) = \prod_{\psi \in \widehat{f C}} \det{\left( \sum_{k = 1}^{n} \psi(f_{k} C) \otimes c_{k} \right)}^{\deg{\psi}}. 
\end{align*}
\end{lem}

Let $K = \{ 1_{G} \} \subset H \subset G$ be a sequence of groups, where $1_{G}$ is the unit element of $G$. 
We take a complete set $\{ e_{1}, e_{2}, \ldots, e_{m} \}$ of left coset representatives of $H$ in $G$ and $H = \{ f_{1}, f_{2}, \ldots, f_{n} \}$ of left coset representatives of $K$ in $H$. 
In the following, we assume that $A := \mathbb{C}[x_{g}] \otimes \mathbb{C} G$, $B := \mathbb{C}[x_{g}] \otimes \mathbb{C} H$ and $C := \mathbb{C}[x_{g}] \otimes \mathbb{C} K$. 
Then, $B$ is a ring, 
$A$ is a ring that is a free right $B$-module with an ordered basis $e := ( e_{1} \: e_{2} \: \cdots \: e_{m} )$, 
$C$ is a ring, 
and $B$ is a ring that is a free right $C$-module with an ordered basis $f := ( f_{1} \: f_{2} \: \cdots \: f_{n} )$. 
Let $\alpha = \sum_{g \in G} x_{g} g$. 
It is easy to show that $\Theta(A : C)(\alpha) = \Theta(G)$.  
Therefore, we have $\Theta(G) = \Theta(\Mat(m, B) : C) \circ L_{e})(\alpha)$. 
From Lemma~$\ref{lem:3.6}$, we have the following. 

\begin{thm}[Section~1, Theorem~\ref{thm:1.1.2}]\label{thm:4.1}
Let $L_{e}(\alpha) = \sum_{h \in H} (h \otimes I_{m}) c_{h} \in \Mat(m, B)$, where $c_{k} \in \Mat(m, \mathbb{C}[x_{g}])$. 
Then we have 
\begin{align*}
\Theta(G) = \prod_{\psi \in \widehat{H}} \det{\left( \sum_{h \in H} \psi(h) \otimes c_{h} \right)}^{\deg{\psi}}. 
\end{align*}
\end{thm}

The polynomial ring $\mathbb{C}[x_{g}]$ is a unique factorization domain. 
Therefore, we have the following corollary from Theorems~$\ref{thm:2.1.1}$ and $\ref{thm:4.1}$.

\begin{cor}[Section~$1$, Corollary~$\ref{cor:1.1.3}$]\label{cor:4.1.4}
Let $G$ be a finite group and let $H$ a subgroup of $G$. 
For all $\varphi \in \widehat{G}$, 
we have 
\begin{align*}
\deg{\varphi} \leq [G:H] \times \max\left\{ \deg{\psi} \: \vert \: \psi \in \widehat{H} \right\}. 
\end{align*}
\end{cor}
\begin{proof}
We have 
\begin{align*}
\deg{\varphi} 
&= \deg{ \left( \det{ \left( \sum_{g \in G} \varphi(g) x_{g} \right) } \right) } \\ 
&\leq \max \left\{ \deg{\left( \det{ \left( \sum_{h \in H} \psi(h) \otimes c_{h} \right) } \right)} \: \vert \: \psi \in \widehat{H} \right\} \\ 
&= \max \left\{ \deg{\psi} \times [G:H] \: \vert \: \psi \in \widehat{H} \right\} \\ 
&= [G:H] \times \max\left\{ \deg{\psi} \: \vert \: \psi \in \widehat{H} \right\}. 
\end{align*}
This completes the proof. 
\end{proof}

Remark that Corollary~$\ref{cor:4.1.4}$ also follows from Frobenius reciprocity\cite{kondo2011group}.

\clearpage

\thanks{\bf{Acknowledgments}}
I am deeply grateful to Prof. Hiroyuki Ochiai, Prof. Geoffrey Robinson, Dr. Benjamin Sambale, and Dr. Yuka Yamaguchi who provided helpful comments and suggestions. 
This work was supported by a grant from the Japan Society for the Promotion of Science (JSPS KAKENHI Grant Number 15J06842).

\bibliographystyle{amsplain}
\bibliography{reference}

\medskip
\begin{flushleft}
Naoya Yamaguchi \\
Faculty of Education \\
University of Miyazaki  \\
1-1 Gakuen Kibanadai-nishi \\
Miyazaki, 889-2192 \\
Japan\\
n-yamaguchi@cc.miyazaki-u.ac.jp
\end{flushleft}

\end{document}